\numberwithin{equation}{section}
\theoremstyle{plain}
\newtheorem{Th}{Theorem}[section]
\newtheorem{Lemma}[Th]{Lemma}
\newtheorem{Cor}[Th]{Corollary}
 \theoremstyle{definition}
\newtheorem{Def}[Th]{Definition}
\newtheorem{Rem}[Th]{Remark}
\newtheorem{?}[Th]{Problem}
\newcommand{\C}{\mathbb{C}}
\newcommand{\E}{\mathbb{E}}
\renewcommand{\P}{\mathbb{P}}
\DeclarePairedDelimiterX{\inner}[2]{\langle}{\rangle}{#1, #2}
\newcommand{\mainsectionstyle}{%
  \renewcommand{\@secnumfont}{\bfseries}
  \renewcommand\section{\@startsection{section}{1}%
	\z@{.7\linespacing\@plus\linespacing}{.5\linespacing}%
	{\normalfont\scshape\centering\bfseries}}

}
\newcommand{\bigcomp}{%
  \DOTSB
  \mathop{\vphantom{\sum}\mathpalette\bigcomp@\relax}%
  \slimits@
}
\newcommand{\bigcomp@}[2]{%
  \begingroup\m@th
  \sbox\z@{$#1\sum$}%
  \setlength{\unitlength}{0.9\dimexpr\ht\z@+\dp\z@}%
  \vcenter{\hbox{%
    \begin{picture}(1,1)
    \bigcomp@linethickness{#1}
    \put(0.5,0.5){\circle{1}}
    \end{picture}%
  }}%
  \endgroup
}
\newcommand{\bigcomp@linethickness}[1]{%
  \linethickness{%
      \ifx#1\displaystyle 2\fontdimen8\textfont\else
      \ifx#1\textstyle 1.65\fontdimen8\textfont\else
      \ifx#1\scriptstyle 1.65\fontdimen8\scriptfont\else
      1.65\fontdimen8\scriptscriptfont\fi\fi\fi 3
  }%
}
 \newcommand{\Cap }{\mathrm{cap}}
\begin{document}

\title{Capacity of Lorentzian polynomials and distance to binomial distributions}

\author{\'Ad\'am Schweitzer}
\thanks{The author thanks Péter Csikvári for his help as supervisor.}

\begin{abstract}
In this paper we study the capacity of Lorentzian polynomials. We give a new proof of a theorem of Br\"and\'en, Leake and Pak. Our approach is probabilistic in nature and uses a lemma about a certain distance of binomial distributions to distributions with fixed expected value. 
    
\end{abstract}


\maketitle

\section{Introduction}

Lorentzian polynomials form an important class of multivariate polynomials that in some sense generalize the class of real stable homogeneous polynomials (\cite{branden2020lorentzian}). Stable polynomials are defined as follows: a multivariate polynomial $P\in \C[x_1,\dots x_n]$ is stable if $P(z_1,\dots ,z_n)\neq 0$ whenever $\mathrm{Im}(z_i)>0$ for all $i\in \{1,2,\dots ,n\}$. We call $P$ a real stable polynomial if $P$ is stable and all coefficients are real. Univariate real stable polynomials can be easily seen to be exactly the real-rooted polynomials. Thus real stable polynomials can be viewed as a multivariate generalization of real-rooted polynomials. A homogenous bivariate polynomial is real stable if and only if, if we substitute 1 to one of it's variables we get a real rooted polynomial (Example 2.3 in \cite{branden2020lorentzian}). The coefficients of a real rooted polynomials satisfy the Newton inequality:
$$\frac{a_i^2}{\binom{n}{i}^2}\geq \frac{a_{i-1}}{\binom{n}{i-1}}\frac{a_{i+1}}{\binom{n}{i+1}}.$$
A homogenous bivariate polynomial is Lorentzian if and only if its coefficients satisfy this inequality (Example 2.3 in \cite{branden2020lorentzian}). It turns out that the class of stable polynomials is closed under coordinate-wise differentiation. 

This leads to a definition of Lorentzian polynomials for all degrees:
A homogeneous polynomial of degree at least 3 is Lorentzian if its support is M-convex in the sense of \cite{murota1998discrete}, and each of its derivatives are Lorentzian as well.
A homogeneous polynomial of degree at most 2 is Lorentzian if it is real stable and has non negative coefficients.

That this definition is equivalent to the usual is proved in \cite{branden2020lorentzian} in Theorem 2.25.



Another important concept for multivariate polynomials with real coefficients is the capacity. It is defined as follows: let $P\in \mathbb{R}_{\geq 0}[x_1,\dots ,x_n]$ and $\underline{\alpha}\in \mathbb{R}_{\geq 0}^n$:
$$\mathrm{cap}_{\underline{\alpha}}(P)=\inf_{x_1,\dots ,x_n>0}\frac{P(x_1,\dots ,x_n)}{x_1^{\alpha_1}\dots x_n^{\alpha_n}}.$$
Capacity was introduced by Gurvits (\cite{gurvits2007van}) and its main power arises from the fact that when $P$ is a stable polynomial with non-negative coefficients and $T$ is some operator on polynomials preserving stability, then it is often possible (\cite{leake2018counting}) to prove an inequality of type
$$\mathrm{cap}_{\underline{\beta}}(TP)\geq c(T,\underline{\alpha},\underline{\beta})\mathrm{cap}_{\underline{\alpha}} (P).$$
Gurvits used this observation to prove a series of results including a new proof of van der Waerden conjecture on permanents(\cite{Gurvits2006HyperbolicPA}), a new proof of a theorem of Schrijver on the number of perfect matchings of regular bipartite graphs (\cite{SCHRIJVER1998122},\cite{gurvits2007van}), a proof of the asymptotic lower matching conjecture (\cite{gurvits2011unleashing}), Bapat's conjecture.
In the heart of this capacity inequality there is an inequality about (univariate) real-rooted polynomials. There are several proofs for this inequality. One of them reveals a connection with probability theory (\cite{csikvari2020short}), namely, a theorem of Hoeffding (\cite{hoeffding1956} Theorem 5,\cite{branden2020lower} Corollary 5.9) on the probability distribution generated by the coefficients of the polynomial easily implies the required statement. 
The corresponding statement for Lorentzian polynomials was proved by Br\"and\'en, Leake and Pak (\cite{branden2020lower}). There does not seem to any connection with probability theory. In this paper we give a proof that has a probabilistic nature. The idea is that if a random variable $X$ has binomial distribution $\mathrm{Bin}(n,p)$ and we condition on an event $A$ such that the conditioned random variable has an expected value $ns$ different from $np$, then one can give a bound on the probability $A$. (The precise statement is Lemma~\ref{main lemma}.) 

\section{Capacity of Lorentzian polynomials}

In this section we prove the following statement.

\begin{Th} \label{capacity}
If $P$ is a Lorentzian polynomial,  then
$$\Cap  _{\underline{\alpha}}(P) \cdot  \binom{n}{k} \bigg( \frac{k}{n} \bigg)^k \bigg( \frac{n-k}{n} \bigg)^{n-k} \leq \frac{1}{k!} \Cap_{\underline{\alpha}^*}\bigg(\frac{\partial P}{\partial x_i^k}\bigg|_{x_i=0}\bigg)  $$
where n is the total degree of polynomial and $\alpha_i=k$ and $\underline{\alpha}^*$ denotes $\underline{\alpha}$ vector without $\alpha_i$.
\end{Th}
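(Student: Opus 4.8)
The plan is to strip $P$ down to a single variable, reducing the claim to a one-variable capacity inequality, and then to prove that inequality by the probabilistic mechanism indicated in the introduction. First, we may assume $\Cap_{\underline{\alpha}}(P)>0$, which by homogeneity forces $\sum_j\alpha_j=n$, and we relabel so that $i=1$. Writing $P=\sum_{j=0}^{n}x_1^j P_j(x_2,\dots,x_m)$ with $P_j$ homogeneous of degree $n-j$, we have $\frac{1}{k!}\left.\frac{\partial^k P}{\partial x_1^k}\right|_{x_1=0}=P_k$ and $\sum_{j\ge2}\alpha_j=n-k$. For positive reals $c_2,\dots,c_m$ the substitution $x_j\mapsto c_j s$ $(j\ge2)$ is a nonnegative linear change of variables and hence preserves the Lorentzian property, so
\[
Q(x_1,s):=P(x_1,c_2s,\dots,c_ms)=\sum_{j=0}^{n}a_j\,x_1^j s^{n-j},\qquad a_j:=P_j(c_2,\dots,c_m),
\]
is bivariate homogeneous Lorentzian of degree $n$; equivalently, by Example~2.3 of \cite{branden2020lorentzian}, the sequence $a_j/\binom nj$ is log-concave with no internal zeros. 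Bounding $\Cap_{\underline{\alpha}}(P)$ by the values of $Q$ along this line and using that $Q(x_1,s)/(x_1^k s^{n-k})$ is homogeneous of degree $0$ in $(x_1,s)$ gives $\Cap_{\underline{\alpha}}(P)\cdot\prod_{j\ge2}c_j^{\alpha_j}\le\inf_{u>0}\tilde Q(u)/u^k$, where $\tilde Q(u):=\sum_j a_j u^j$. Dividing by $\prod_{j\ge2}c_j^{\alpha_j}$ and taking the infimum over $c_2,\dots,c_m>0$, the theorem reduces to the one-variable bound
\[
\inf_{u>0}\frac{\tilde Q(u)}{u^k}\ \le\ \frac{a_k}{\binom nk (k/n)^k((n-k)/n)^{n-k}}.
\]

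To prove this, first note that if $a_k=0$, or if the support of $(a_j)$ lies entirely in $\{0,\dots,k\}$ or entirely in $\{k,\dots,n\}$, then $\inf_{u>0}\tilde Q(u)/u^k$ equals $0$ or $a_k$, and the bound follows because $\binom nk(k/n)^k((n-k)/n)^{n-k}\le1$. Otherwise the infimum is attained at some $u^*\in(0,\infty)$, and the key step is an exponential tilt: set $\mu(j):=a_j (u^*)^j/\tilde Q(u^*)$, a probability distribution on $\{0,\dots,n\}$. Since scaling a variable and rescaling a polynomial preserve the Lorentzian property, $\mu(j)/\binom nj$ is again log-concave with no internal zeros, and the stationarity condition $u^*\tilde Q'(u^*)=k\,\tilde Q(u^*)$ says exactly that $\mu$ has mean $k$. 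As $\mu(k)=a_k(u^*)^k/\tilde Q(u^*)$, the one-variable bound is equivalent to
\[
\mu(k)\ \ge\ \binom nk (k/n)^k((n-k)/n)^{n-k}\ =\ \Pr[\mathrm{Bin}(n,k/n)=k].
\]

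It remains to realize $\mu$ as a binomial conditioned on an event. Put $c_j=\mu(j)/\binom nj$; log-concavity makes $c_{j+1}/c_j$ non-increasing, so we can choose $\lambda>0$ with $c_{k+1}/c_k\le\lambda\le c_k/c_{k-1}$, and set $p=\lambda/(1+\lambda)$, $q=1-p$. Let $X\sim\mathrm{Bin}(n,p)$ with pmf $\beta(j)=\binom nj p^jq^{n-j}$; then $\rho_j:=\mu(j)/\beta(j)=c_j/(p^jq^{n-j})$ is log-concave, and the choice of $\lambda$ forces it to be maximal at $j=k$. Setting $h(j):=\rho_j/\rho_k\in[0,1]$ (so $h(k)=1$) and introducing, on an enlarged probability space, an event $A$ with $\Pr[A\mid X=j]=h(j)$, one gets $\Pr[A]=\sum_j\beta(j)h(j)=1/\rho_k$ and $\Pr[X=j\mid A]=\mu(j)$, hence $\mathbb E[X\mid A]=k=n\cdot(k/n)$. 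Applying Lemma~\ref{main lemma} with this conditional mean yields $\Pr[A]\le p^kq^{n-k}/((k/n)^k((n-k)/n)^{n-k})$, and combining with $\Pr[A]=1/\rho_k=\binom nk p^kq^{n-k}/\mu(k)$ and cancelling $p^kq^{n-k}$ gives $\mu(k)\ge\binom nk(k/n)^k((n-k)/n)^{n-k}$, as required.

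The hard part is not Lemma~\ref{main lemma} itself, which should take only a couple of lines: for any $\theta$, convexity of $t\mapsto e^{\theta t}$ gives $\Pr[A]\,e^{\theta\,\mathbb E[X\mid A]}\le\mathbb E[e^{\theta X}\mathbf{1}_A]\le\mathbb E[e^{\theta X}]=(q+pe^{\theta})^n$, and minimizing over $\theta$ produces the relative-entropy bound used above. The real work is in setting up the probabilistic picture so that this bound is used at full strength — in particular, choosing $p$ so that $j=k$ is the mode of $\mu/\beta$, which is precisely what makes $h(k)=1$ and turns $\Pr[A]=\binom nk p^kq^{n-k}/\mu(k)$ into an identity rather than a mere inequality; the existence of such a $p$ is where ultra-log-concavity of the coefficients (the Newton inequalities defining the bivariate Lorentzian condition) is genuinely used. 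Recognizing that the optimal exponential tilt of a Lorentzian coefficient sequence is a mean-$k$ distribution of exactly the required kind is the conceptual crux.
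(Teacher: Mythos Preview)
Your proof is correct and follows essentially the same route as the paper: reduce to a bivariate Lorentzian polynomial by a nonnegative linear substitution, tilt so that the resulting coefficient distribution has mean $k$, use ultra-log-concavity to dominate it by a binomial and build the conditioning event $A$ with $\Pr[A\mid X=k]=1$, and then finish with the Chernoff/Jensen bound of Lemma~\ref{main lemma}. The only cosmetic differences are that the paper fixes $\lambda=c_k/c_{k-1}$ rather than any $\lambda$ in $[c_{k+1}/c_k,\,c_k/c_{k-1}]$, and it packages the tilt-then-bound step as a separate univariate theorem before invoking it, whereas you fold the tilt directly into the argument and treat the edge cases (support on one side of $k$) explicitly.
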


This implies the following useful property of Lorentzian polynomials.

\begin{Cor}
Let $P(x_1,\dots ,x_n)$ be a Lorentzian polynomial. Suppose that the total degree of $P$ is $d$. Let $\underline{r}=(r_1,\dots ,r_n)$ and $a_{\underline{r}}$ be the coefficient of $\prod_{i=1}^nx_i^{r_i}$ in $P$. Then
$$a_{\underline{r}}\geq \prod_{i=1}^n\binom{d}{r_i}\bigg(\frac{r_i}{d}\bigg)^{r_i}\bigg(\frac{d-r_i}{d}\bigg)^{d-r_i}\mathrm{cap}_{\underline{r}}(P).$$
\end{Cor}

The key statement to prove Theorem~\ref{capacity} is to prove the following theorem on univariate polynomials. To state the theorem we will need the following definition.

\begin{Def}[Pólya frequency sequence of order two]
A sequence $a=(a_1,a_2\ldots a_n)$ is defined as a Pólya frequency sequence of order two if and only if it satisfies the following conditions:
\begin{itemize}
    \item Every element of $a$ is non-negative.
    \item If $a_i>0$ and $a_j>0$ then for all $k$ such that $i<k<j$ it holds that $a_k>0$.
    \item For all $i$ it holds that $a_i^2\geq a_{i-1}a_{i+1}$
\end{itemize}
\end{Def}

Now we can state the needed theorem.

\begin{Th}
Let $P(x)=\sum_{j=0}^n a_jx^j$, where $\frac{a_i}{\binom{n}{i}}$ forms a Pólya frequency sequence of order two. Let $P(1)=1$ and $P'(1)=ns \in \mathbb{N}$. In this case:
$$a_{ns}\geq \binom{n}{ns} (s^s \left(1-s\right)^{(1-s)})^n.$$\label{thm}
\end{Th}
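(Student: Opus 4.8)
The plan is to interpret the normalized coefficients $b_j = a_j/\binom{n}{j}$ probabilistically. Since $(b_j)$ is a Pólya frequency sequence of order two, the ratios $b_j/b_{j-1}$ are nonincreasing on the support, which is exactly the log-concavity that will let us compare $P$ against a suitable binomial polynomial. Concretely, I would consider the probability distribution on $\{0,1,\dots,n\}$ given by $p_j = a_j$ (using $P(1)=1$), so that $p_j = \binom{n}{j} b_j$, and the "tilted" family $p_j(t) \propto a_j t^j$ for $t>0$. The condition $P'(1) = ns$ says that at $t=1$ the mean of this distribution is already $ns$; more usefully, I want to find $t$ so that the tilted distribution $p_j(t)$ has a convenient form. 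The key point is that tilting a PF$_2$ sequence preserves the PF$_2$ property (multiplying $a_j$ by $t^j$ keeps log-concavity and the support-interval condition), and tilting the binomial $\binom{n}{j}s^j(1-s)^{n-j}$ by the same amount keeps it binomial.

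The main steps I would carry out: (1) Reduce to showing $a_{ns} \ge \binom{n}{ns} s^{ns}(1-s)^{n-ns}$ by exhibiting this as the value one gets from the "extremal" PF$_2$ sequence with the given mean, namely the genuine binomial $\mathrm{Bin}(n,s)$. (2) Set up the comparison: let $X$ have law $p_j = a_j$ and let $Y \sim \mathrm{Bin}(n,s)$, both with mean $ns$. I want $\P(X = ns) \ge \P(Y = ns)$, i.e., $a_{ns}\ge \binom{n}{ns}s^{ns}(1-s)^{n-ns}$. (3) Use log-concavity of $(b_j)$: because $b_j/b_{j-1}$ is nonincreasing while the binomial has ratio $\binom{n}{j}s^j(1-s)^{n-j}/\binom{n}{j-1}s^{j-1}(1-s)^{n-j+1}$ with $\binom{n}{j}/\binom{n}{j-1} = (n-j+1)/j$ which is also nonincreasing, I would compare the two sequences $a_j$ and $c_j := \binom{n}{j}s^j(1-s)^{n-j}$ by analyzing where the ratio $a_j/c_j$ is increasing vs. decreasing. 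Log-concavity of $a_j/c_j$ (quotient of a log-concave sequence by a log-concave sequence is not automatically log-concave, so here I must instead argue) — the correct mechanism is: $\log(b_j/b_{j-1})$ is nonincreasing and $\log(c_j \cdot \binom{n}{j}^{-1}/(c_{j-1}\binom{n}{j-1}^{-1})) = \log(s/(1-s))$ is constant, so $a_j/c_j = b_j/(s^j(1-s)^{n-j})$ has nonincreasing successive ratios, i.e. is log-concave. A log-concave sequence that sums (against the binomial weights, giving total mass $1$ for both) to the same total and has the same mean $ns$ must cross the binomial at most a bounded number of times, and a crossing analysis forces $a_{ns} \ge c_{ns}$ at the mean.

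The hardest step is step (3): turning "$a_j/c_j$ is log-concave, $\sum a_j = \sum c_j = 1$, $\sum j a_j = \sum j c_j = ns$" into the pointwise inequality at the mean. I expect to handle this via a sign-change / variation-diminishing argument: a log-concave sequence $r_j := a_j/c_j$ that is not identically $1$ on the support, when weighted by the positive probability weights $c_j$, has the property that $r_j - 1$ changes sign at most twice; combined with the two moment constraints $\sum c_j(r_j - 1) = 0$ and $\sum j c_j (r_j-1) = 0$, the sign pattern of $r_j - 1$ is forced to be $+,-,+$ or identically zero (it cannot be $-,+,-$ because that would contradict log-concavity of $r_j$, whose graph is $\cap$-shaped, not $\cup$-shaped). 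Hence $r_j \ge 1$ in the middle region, and in particular at $j = ns$, which lies in that region because it equals the common mean. I would need a short lemma that the mean $ns$ indeed falls in the "$+$" region; this follows because if $r_{ns} < 1$ then the mass of $(c_j)$ would be pushed to the two tails where $r_j > 1$, pulling the mean of $(a_j)$ away from $ns$ in a way incompatible with both distributions having the same mean — but since $\cap$-shaped $r_j$ puts its excess in the middle, not the tails, this is contradictory. Packaging this crossing argument cleanly, and verifying the edge cases where the support of $(a_j)$ is a proper subinterval of $\{0,\dots,n\}$, is where the real work lies.
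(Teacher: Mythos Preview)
Your reduction to showing $r_{ns}\ge 1$ for the log-concave ratio $r_j=a_j/c_j$ with $c_j=\binom{n}{j}s^j(1-s)^{n-j}$ is sound, but the crossing argument you propose to finish it does not work as written. First, the sign pattern is inverted: a log-concave $r$ is $\cap$-shaped, so $r_j-1$ has pattern $-,+,-$, not $+,-,+$ (your conclusion ``$r_j\ge 1$ in the middle'' matches the former, contradicting your own stated exclusion). More importantly, two sign changes together with the two orthogonality constraints $\sum_j c_j(r_j-1)=0$ and $\sum_j jc_j(r_j-1)=0$ do \emph{not} force the $+$ interval to contain $ns$: two moment constraints exactly match two sign changes in the Chebyshev-system sense, which means such an $r-1$ can exist nontrivially while its zero set sits anywhere. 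Your ``mass pushed to the tails'' heuristic is also inverted under the correct sign pattern, and even when corrected it is not a proof. (Your strategy \emph{can} be rescued by a different device: the supporting line of the concave function $\log r$ at $ns$ gives $\log r_j\le\log r_{ns}+\gamma(j-ns)$; averaging against $a_j$ and using $\sum_j j\,a_j=ns$ yields $\log r_{ns}\ge \sum_j a_j\log(a_j/c_j)\ge 0$.)

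The paper takes a genuinely different route. Rather than comparing to $\mathrm{Bin}(n,s)$, it fixes $p$ from the local slope via $p/(1-p)=b_{ns}/b_{ns-1}$, so that log-concavity of $(b_j)$ yields a one-sided \emph{domination} $a_j\le c\binom{n}{j}p^j(1-p)^{n-j}$ for all $j$, with equality at $j=ns$; summing gives $c\ge 1$. One then realizes $(a_j)$ as the conditional law $X\mid A$ for $X\sim\mathrm{Bin}(n,p)$ with $\P(A)=1/c$ and $\P(A\mid X=ns)=1$, and feeds the mean condition $\E[X\mid A]=ns$ into a Chernoff--Jensen bound $\P(A)\le \E[e^{tX}]/e^{tns}$, optimized over $t$, to obtain $\P(A)\le \big(p^s(1-p)^{1-s}/s^s(1-s)^{1-s}\big)^n$; Bayes' rule then gives the theorem. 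The point is that the paper's choice of reference binomial converts log-concavity into pointwise domination rather than a crossing pattern, so the mean constraint can be exploited through an exponential tilt instead of a sign-change count---which is exactly the mechanism your sketch was missing.
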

\begin{proof}
Let us define the sequence $b_i$ as $b_i:=\frac{a_i}{\binom{n}{i}}$. This way $b_i$ must form a log-concave series.\\
Let $p\in (0,1)$ be determined by the equation $\frac{p}{1-p}=\frac{b_{ns}}{b_{ns-1}}$.
Let $q=\frac{p}{1-p}$.
Let us choose a number $c$ such that: $$b_{ns}=c (p^{s} (1-p)^{1-s})^n,$$  $$b_{ns-1}=c (p^{s-\frac{1}{n}} (1-p)^{1-s+\frac{1}{n}})^n.$$
As $b_i$ is positive, this can be done.
\smallbreak
From the log concavity of $b_i$ we also know that
\[
\frac{b_{i+1}}{b_{i}}\geq \frac{b_{i}}{b_{i-1}}\tag{1} \label{eq:logcon}.
\]
\smallbreak
From these facts we can prove the following bound for $a_i$
\begin{Lemma}
For every $i\in [0,n]$
$$a_i\leq \binom{n}{i} c p^i (1-p)^{n-i}.$$
\end{Lemma}
\begin{proof}

As $\frac{b_{ns}}{b_{ns-1}}=q$ it is easy to see from \eqref{eq:logcon}, that for every $i>ns$ the following holds: $$\frac{b_{i}}{b_{i-1}}\geq q,$$ 
and similarly for every $i<ns$ :
$$\frac{b_{i}}{b_{i-1}}\leq q.$$
From this it follows that $\frac{b_{ns}}{b_{i}}\geq p^{ns-i}$ if $i<ns$ and that $\frac{b_{i}}{b_{ns}}\leq p^{i-ns}$ if $i>ns$. 
From these properties we get an upper bound for $b_i$.
For $i<ns$ we get:
$$b_i=\frac{b_{ns}}{\left( \frac{b_{ns}}{b_{i}}\right)} \leq b_{ns} p^{-(ns-i)}=c p^i (1-p)^{n-i},$$
for $i>ns$ we get:
$$b_i=\frac{b_{ns}}{\left( \frac{b_{i}}{b_{ns}}\right)} \leq b_{ns} p^{i-ns}=c p^i (1-p)^{n-i}.$$
From this we get the desired bound: $a_i \leq \binom{n}{i} c p^i (1-p)^{n-i}$.
\end{proof}

From this we get an upper bound of $P(1)$:
$$P(1)=\sum_{i=0}^n a_i \leq \sum_{i=0}^n c \binom{n}{i}  p^i (1-p)^{n-i} = c,$$
thus $c\geq 1$.

Let $Y$ be a random variable that takes its values from $\{0,1,\dots ,n\}\times \{0,1\}$ with the following probabilities:

$$\P(Y=(k,\epsilon))=\left\{ \begin{array}{lc} \frac{a_k}{c} & \mbox{if}\ \epsilon=1, \\  \binom{n}{k}p^k (1-p)^{n-k}-\frac{a_k}{c} & \mbox{if}\  \epsilon=0. \end{array} \right.$$
Then $Y=(X,Z)$.  Clearly, $Y$ is well-defined since $a_k\leq c\binom{n}{k}p^k (1-p)^{n-k}$. Let $A$ be the event that $Z=1$. Then $A$ has probability $\frac{1}{c}$. Let $X|A$ be the random variable that we condition $X$ on $A$. Clearly,  $X|A$ has distribution $(a_i)$, and $X|\overline{A}$ has distribution $(\binom{n}{i} c/(c-1) p^i (1-p)^{n-i}-a_i/(c-1))$ .
The following holds true:
\begin{itemize}
    \item $X$ has binomial distribution $\mathrm{Bin}(n,p)$,
    \item $a_{ns}= \P[X=ns|A]$,
    \item $\P[X=ns|\overline A]=0$,
    \item $\P[A|X=ns]=1$.
\end{itemize}
All these statements are clear from the definition of $Y$ and $X$, and the fact that $a_{ns}=c\binom{n}{ns}p^{ns}(1-p)^{n-ns}$ by the definition of $p$ and $c$.

Now the heart of our argument is the following general lemma.

\begin{Lemma} \label{main lemma} Let $X$ be a random variable with binomial distribution $\mathrm{Bin}(n,p)$. Suppose that for an event $A$ we have $sn=\E [X|A]$ and $\P[A|X=ns]=1$. Then
$$ \P [X=ns|A]  \geq \binom{n}{ns}(s^s(1-s)^{1-s})^n . $$
\end{Lemma}

\begin{proof}
By the Bayes theorem  we only have to prove the following:
$$ \frac{\P [X=ns]}{\P[A]}  \geq \binom{n}{ns}(s^s(1-s)^{1-s})^n,$$
$$ \frac{\binom{n}{ns}(p^s(1-p)^{1-s})^n }{\P[A]}  \geq \binom{n}{ns}(s^s(1-s)^{1-s})^n  ,$$
$$\frac{(p^s(1-p)^{1-s})^n}{(s^s(1-s)^{1-s})^n} \geq \P[A] .$$
We know that for any $t>0$:
$$\P[A] \leq \frac{\E[e^{tX}]}{\E[e^{tX}|A]}.$$
As $e^{tx}$ is a convex function we know that:
$$\E[e^{tX}|A]\geq e^{t\E[X|A]} =e^{tns},$$
and by that:
$$\P[A] \leq \frac{\E[e^{tX}]}{e^{tns}}.$$
By the linearity of the expected value:
$$\P[A] \leq \left(\frac{\E[e^{tX_0}]}{e^{ts}}\right)^n,$$
where $X_0$ is a Bernoulli random variable with $p$ probability.
If we expand the expected value, we get
$$\left(\frac{\E[e^{tX_0}]}{e^{ts}}\right)^n=\left((1-p)e^{-ts}+pe^{t-ts}\right)^n.$$
We minimize the $n$-th root of this in $t$ by taking its derivative:
$$\frac{\partial}{\partial t}((1-p)e^{-ts}+pe^{t-ts})=p(1-s)e^{t(1-s)}-s(1-p)e^{-ts}=0,$$
$$e^t=\frac{(1-p)s}{(1-s)p}.$$
By this our bound is:
$$\left((1-p)e^{-ts}+pe^te^{-ts}\right)^n=\left((1-p+pe^t)e^{-ts}\right)^n=\left(\left(1-p+\frac{(1-p)s}{(1-s)}\right)\left(\frac{(1-s)p}{(1-p)s}\right)^s\right)^n.$$
Which is exactly the desired bound.

\end{proof}
As $\mathbb P[X=ns|A]=a_{ns}$ this completes our proof.
\end{proof}
\begin{Rem}

In Lemma \ref{main lemma} the expression
$$\frac{(p^s(1-p)^{1-s})^n}{(s^s(1-s)^{1-s})^n} \geq \P[A]$$
can be expressed as

$$e^{-D_1(P||Q)} \geq e^{-D_\infty(Q||P)}$$
where $D_\alpha$ is the Rényi divergence (\cite{van2014renyi}), and where $P$ is the distribution of $X$ and $Q$ is the distribution of $X$ if we restrict it to $A$. 
\smallbreak
This means that Lemma \ref{main lemma} is equivalent to the following statement:
If $P=\mathrm{Bin}(n,p)$ and $\E[Q]=\E[P]$ then 
$$D_1(P||Q) \leq D_\infty(Q||P)$$

This statement can be generalized to handle sums of variables with Bernoulli distributions (which may differ in each summand), not just variables with binomial distributions. 
\end{Rem}
\bigbreak

Now we are ready to prove Theorem~\ref{capacity}.

\begin{proof}[Proof of Theorem~\ref{capacity}]

Let us show that our previous theorem implies this one. First we will show that it implies the following lemma:

\begin{Lemma}
For a Lorentzian polynomial  $p(y,z)=\sum_{i=0}^na_iz^iy^{n-i}$ with non-negative coefficients:
$$a_k\geq \binom{n}{k}\left(\frac{k}{n}\right)^k\left(\frac{n-k}{n}\right)^{n-k}\inf_{t>0}\frac{p(1,t)}{t^k}.$$
\end{Lemma}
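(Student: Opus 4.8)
The idea is to reduce the stated Lemma directly to Theorem~\ref{thm} by rescaling the polynomial so that it is normalized the way that theorem requires. Fix $k$ and write $\inf_{t>0} p(1,t)/t^k =: C$. If $C=0$ the inequality is trivial, so assume $C>0$; then $C$ is a genuine minimum attained at some $t_0>0$ (the function $t\mapsto p(1,t)/t^k$ is positive on $(0,\infty)$, tends to $+\infty$ at both ends since $p$ has non-negative coefficients with $a_0,a_n$ governing the behaviour — more carefully, one uses that $a_k>0$, which follows from M-convexity of the support of a Lorentzian polynomial once $C>0$ forces the relevant coefficients to be positive). The first step is therefore to record that the infimum is attained and that at the minimizer $t_0$ one has, by first-order optimality, $t_0 \, p_z(1,t_0) = k\, p(1,t_0)$, i.e. the logarithmic derivative condition that will become $P'(1)=ns$ after normalization.

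Next I would perform the substitution. Set $Q(x) := p(1, t_0 x)$, a univariate polynomial of degree $n$ with coefficients $a_i t_0^i \ge 0$, and then normalize: let $P(x) := Q(x)/Q(1) = p(1,t_0 x)/p(1,t_0)$. Then $P(1)=1$ by construction, and $P'(1) = t_0 p_z(1,t_0)/p(1,t_0) = k$ by the optimality condition above, so $s = k/n$. The coefficient of $x^k$ in $P$ is $a_k t_0^k / p(1,t_0) = a_k t_0^k / C$. Crucially, the coefficient sequence of $P$ is $a_i t_0^i/C$, and dividing by $\binom{n}{i}$ gives $\frac{a_i t_0^i}{C\binom{n}{i}}$; since $\frac{a_i}{\binom{n}{i}}$ is a Pólya frequency sequence of order two (this is exactly the content of the bivariate Lorentzian condition — the Newton inequalities plus the no-internal-zeros/support condition recalled in the introduction), and multiplying a PF$_2$ sequence termwise by the geometric sequence $t_0^i$ and by the positive constant $1/C$ preserves all three defining properties (non-negativity is clear; the no-internal-zeros condition is unaffected since $t_0>0$; and $(a_it_0^i)^2 = a_i^2 t_0^{2i} \ge a_{i-1}a_{i+1} t_0^{2i} = (a_{i-1}t_0^{i-1})(a_{i+1}t_0^{i+1})$), the hypothesis of Theorem~\ref{thm} is satisfied.

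Finally, apply Theorem~\ref{thm} to $P$ with $ns = k$: it gives that the coefficient of $x^k$ in $P$ is at least $\binom{n}{k}(s^s(1-s)^{1-s})^n = \binom{n}{k}\left(\frac{k}{n}\right)^{k}\left(\frac{n-k}{n}\right)^{n-k}$. Since that coefficient equals $a_k t_0^k/C$, we get $a_k t_0^k/C \ge \binom{n}{k}\left(\frac{k}{n}\right)^{k}\left(\frac{n-k}{n}\right)^{n-k}$, and since $t_0^k \le 1$ is \emph{not} automatic — indeed there is no such bound — I instead note that we do not need it: rearranging gives $a_k \ge \binom{n}{k}\left(\frac{k}{n}\right)^{k}\left(\frac{n-k}{n}\right)^{n-k}\, C \, t_0^{-k}$, which is \emph{stronger} than the claimed bound only if $t_0 \le 1$; in general one should instead absorb $t_0$ correctly. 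The clean way: the infimum $C$ is scaling-covariant, $\inf_t p(1,t)/t^k$ is unchanged if we had rescaled — so in fact one simply observes $\mathrm{cap}$ is unchanged and the coefficient of $x^k$ in $P$ is $a_k t_0^k/p(1,t_0)$ while $\mathrm{cap}$ of $P$ (with weight $k$) equals $\inf_x P(1,x)/x^k \cdot$(rescale) $= 1$ since the minimizer of $P$ is at $x=1$ with value $1$. Hence $a_k t_0^k / p(1,t_0) \ge \binom{n}{k}(k/n)^k((n-k)/n)^{n-k} \cdot 1$, and $p(1,t_0) = C t_0^{?}$ — here the bookkeeping must be done carefully, and \textbf{this normalization bookkeeping is the main place to be careful}, but it is routine: tracking the powers of $t_0$ shows they cancel exactly, yielding $a_k \ge \binom{n}{k}(k/n)^k((n-k)/n)^{n-k}\, C$, which is the assertion. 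The only real subtlety, beyond this bookkeeping, is justifying that the infimum is attained and that $a_k>0$ when $C>0$, which follows from the M-convexity of the support of a Lorentzian polynomial.
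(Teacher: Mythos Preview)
Your approach is essentially the paper's: rescale the variable, normalize to a probability distribution, and apply Theorem~\ref{thm}. The only difference is that the paper selects the scaling point $t_k$ directly via the first-moment condition $\sum_j j\,a_j t_k^{\,j}/p(1,t_k)=k$ rather than as the minimizer of $t\mapsto p(1,t)/t^k$; these coincide when the minimum is attained, but the paper's formulation then only needs the trivial bound $p(1,t_k)/t_k^{\,k}\ge \inf_{t>0} p(1,t)/t^k$ and so sidesteps entirely your worry about whether the infimum is achieved.

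Your bookkeeping detour stems from a slip: the coefficient of $x^k$ in $P(x)=p(1,t_0 x)/p(1,t_0)$ is $a_k t_0^{\,k}/p(1,t_0)$, not $a_k t_0^{\,k}/C$. Since $t_0$ is the minimizer, $p(1,t_0)=C\,t_0^{\,k}$, so this coefficient is simply $a_k/C$. Theorem~\ref{thm} then gives
\[
\frac{a_k}{C}\ \ge\ \binom{n}{k}\Bigl(\tfrac{k}{n}\Bigr)^{k}\Bigl(\tfrac{n-k}{n}\Bigr)^{n-k},
\]
and you are done in one line. No stray powers of $t_0$ ever appear, and the discussion of whether $t_0\le 1$ is unnecessary.
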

\begin{proof}

For a $t>0$ let us consider the probability distribution $q_j=\frac{a_jt^j}{p(t)}$. Then $q(y,z):=\sum_{j=0}^nq_jy^{n-j}z^j$ is still a Lorentzian polynomial. Choose $t_k$ in such a way that $\sum_{j=0}^njq_j=k=ns$, i.e. $s=\frac{k}{n}$.

In this case the coefficients of the polynomial $q(1,z)$ satisfy the requirements of Theorem \ref{thm}, as of Example 2.3 in \cite{branden2020lorentzian} and Theorem 2.25.

Let us apply our theorem. Then
$$\frac{a_kt_k^k}{p(t_k)}=q_k\geq \binom{n}{k}\left(\frac{k}{n}\right)^k\left(\frac{n-k}{n}\right)^{n-k}.$$
In other words,
$$a_k\geq \binom{n}{k}\left(\frac{k}{n}\right)^k\left(\frac{n-k}{n}\right)^{n-k}\frac{p(t_k)}{t_k^k}\geq \binom{n}{k}\left(\frac{k}{n}\right)^k\left(\frac{n-k}{n}\right)^{n-k}\inf_{t>0}\frac{p(t)}{t^k}.$$
\end{proof}

This implies our statement as if we take the polynomial $p(y,z)=P(yx_1,\dots ,z,\dots ,yx_m)$ we get a Lorentzian polynomial for any $\underline{x}^*>0$ vector by Theorem 2.10 in \cite{branden2020lorentzian} where $\underline{x}^*$ is the vector $\underline{x}$ with the $i$-th coordinate ommited.
If we use our Lemma on this polynomial we get
$$a_k \geq  \binom{n}{k}\left(\frac{k}{n}\right)^k\left(\frac{n-k}{n}\right)^{n-k}\inf_{t>0}\frac{p(1,t)}{t^k}.$$
As $a_k=\left.\frac{\partial P}{\partial x_i}\right|_{x_i=0}$ is a polynomial in $\underline{x}^*$, and for all $\underline{x}^*>0$ the inequality holds we get that

$$\frac{1}{k!} \cdot \Cap _{\underline{\alpha}^*}\left(\left.\frac{\partial P}{\partial x_i^k}\right|_{x_i=0}\right)  \geq \Cap _{\underline{\alpha}^* }\bigg(\Cap _k(P) \cdot  \binom{n}{k} \bigg( \frac{k}{n} \bigg)^k \bigg( \frac{n-k}{n} \bigg)^{n-k}\bigg)  .$$

As $k=\alpha_i$ this is exactly the statement of the theorem.
\end{proof}
\begin{Cor}
Let $P(x_1,\dots ,x_n)$ be a Lorentzian polynomial. Suppose that the total degree of $P$ is $d$. Let $\underline{r}=(r_1,\dots ,r_n)$ and $a_{\underline{r}}$ be the coefficient of $\prod_{i=1}^nx_i^{r_i}$ in $P$. Then
$$a_{\underline{r}}\geq \prod_{i=1}^n\binom{d}{r_i}\bigg(\frac{r_i}{d}\bigg)^{r_i}\bigg(\frac{d-r_i}{d}\bigg)^{d-r_i}\mathrm{cap}_{\underline{r}}(P).$$
\end{Cor}
\begin{proof}
To prove this we will use the theorem repeatedly on a polynomial until there are no variables left resulting in a constant. To apply our theorem this way we need to prove that the polynomial $\left.\frac{\partial P}{\partial x_i^k}\right|_{x_i=0}$ still satisfies the requirements of our theorem, or in other words, that it is still Lorentzian.
From Corollary 2.11 in \cite{branden2020lorentzian} it is clear that $\frac{\partial P}{\partial x_i^k}$ is Lorentzian. If we take the matrix $A$ of size $n\times n$ which is identical to the identity matrix save for the row corresponding to coordinate $i$ where it is zero everywhere and take $\frac{\partial P}{\partial x_i^k}(A\underline{x})$ then it will still be Lorentzian by Theorem 2.10 in \cite{branden2020lorentzian} and it will be equal to our polynomial as $A$ take $(x_1\ldots ,x_i,\ldots,x_n)$ to $(x_1\ldots ,0,\ldots,x_n)$.
\end{proof}

\section*{Acknowledgements}
The author would like to thank Péter Csikvári for his help as supervisor. The author would like to thank Jonathan Leake noticing a mistake in a previous version.

\bibliography{biblio}
\bibliographystyle{plain}

\end{document}